\newtheorem{theorem}{Theorem}
\newtheorem{lemma}[theorem]{Lemma}
\journal{Springer}
\begin{document}

\begin{frontmatter}

%% Title, authors and addresses

%% use the tnoteref command within \title for footnotes;
%% use the tnotetext command for theassociated footnote;
%% use the fnref command within \author or \address for footnotes;
%% use the fntext command for theassociated footnote;
%% use the corref command within \author for corresponding author footnotes;
%% use the cortext command for theassociated footnote;
%% use the ead command for the email address,
%% and the form \ead[url] for the home page:
%% \title{Title\tnoteref{label1}}
%% \tnotetext[label1]{}
%% \author{Name\corref{cor1}\fnref{label2}}
%% \ead{email address}
%% \ead[url]{home page}
%% \fntext[label2]{}
%% \cortext[cor1]{}
%% \address{Address\fnref{label3}}
%% \fntext[label3]{}

\title{Variable Reduction For Quadratic Unconstrained Binary Optimization}

%% use optional labels to link authors explicitly to addresses:
\author[label1]{Amit Verma}
\author{Mark Lewis}
%% \address[label1]{}
\address[label1]{Craig School of Business, Missouri Western State University, Saint Joseph, MO, 64507, United States}

\begin{abstract}
%% Text of abstract
Quadratic Unconstrained Binary Optimization models are useful for solving a diverse range of optimization problems.  Constraints can be added by incorporating quadratic penalty terms into the objective, often with the introduction of slack variables needed for conversion of inequalities. This transformation can lead to a significant increase in the size and density of the problem. Herein, we propose an efficient approach for recasting inequality constraints that reduces the number of linear and quadratic variables. Experimental results illustrate the efficacy.  

\end{abstract}

\begin{keyword}
%% keywords here, in the form: keyword \sep keyword

%% PACS codes here, in the form: \PACS code \sep code

%% MSC codes here, in the form: \MSC code \sep code
%% or \MSC[2008] code \sep code (2000 is the default)

Quadratic Unconstrained Binary Optimization, nonlinear optimization, pseudo-Boolean optimization, inequality constraint, slack variables
\end{keyword}

\end{frontmatter}

%% \linenumbers

\section{Introduction}

Quadratic Unconstrained Binary Optimization (QUBO) has emerged as a popular framework for modeling a wide variety of combinatorial optimization problems (see \cite{kochenberger2014unconstrained, glover2019quantum} for details).  A QUBO model is of the form $min \; {x' Q x}$ where ${x}$ represents the binary decision vector. By definition, a QUBO model is quadratic and does not involve any constraints. However, any equality constraint of the form ${A x} = b$ can be transformed into a penalty term $M ({A x} - b)^2$ which is added to the objective function. Here $M$ represents a large positive number that penalizes the deviation of the left-hand side from the right. We can integrate a linear inequality constraint into the objective by using additional slack variables. First, assuming that the coefficients of the linear inequality are non-negative integers, we transform an inequality constraint ${A x} \le b$ to $Ax \in \{0,1,...,b-1,b\}$. If any coefficient $a_i$ of a variable $x_i$ is negative, we use the complement variable $y_i = 1 - x_i$ to reverse the sign of the coefficient. Next, we interpret the inequality constraint as an equality constraint ${A x} = {D s}$ where $\mathbf{s}$ represents the slack variables and ${D}$ represents the coefficients of the slack variables which are chosen such that the entire slack range $\{0,1,...,b-1,b\}$ is covered. Note without loss of generality that all the involved coefficients of $Q,A$ and $D$ matrices are typically integers. Next, the resulting linear equality ${A x} = {D s}$ is transformed to the objective function as a penalty term $M(Ax - Ds)^2$. 

The coefficients of the $D$ matrix are typically chosen to minimize the number of utilized slack variables. For example, if the inequality constraint is $x_1 + 2 x_2 + 6 x_3 + 10 x_4 \le 15$, the slack variables should cover the set $[0,15]$. For $b=15$, this is naively possible by using $b+1 = 16$ binary slack variables with ${D s} = 0 s_0 + 1 s_1 + 2 s_2 + 3 s_3 + ... + 14 s_{14} + 15 s_{15}$. This technique requires an additional constraint $s_0 + s_1 + s_2 + s_3 + ... + s_{14} + s_{15} = 1$ which enforces exactly one of the slack variables to be active. A more efficient technique presented in \cite{glover2002solving} and \cite{verma2020penalty} covers the slack range $[0,15]$ by using $4$ binary slack variables with ${D s} = s_1 + 2 s_2 + 4 s_3 + 8 s_4$. According to this method, $m$ slack variables cover the slack range corresponding to $2^0 + 2^1 + 2^2 + ... + 2^{m-1} = 2^m - 1$ units. In case $b$ is not of the form $2^m - 1$, we simply add a constant to both sides of the inequality. Using this method, we need $log(b+1)$ slack variables for a right hand side of $b$. Choosing the slack coefficients in this way reduces the number of slack variables from $O(b+1)$ (for the naive method) to $O(log(b+1))$.

The complexity of the augmented QUBO model is highly impacted by the introduction of slack variables, especially for larger values of $b$ because the size of the augmented $Q$ matrix increases from $(n,n)$ to $(n+m,n+m)$ where $n$ is the number of decision variables, and $m$ is the number of slack variables. Hence, the linear terms are increased by $O(m)$ and the quadratic interactions are increased by $O(nm+m^2)$. This dramatically increases the number of quadratic terms in the $Q$ matrix adversely impacting the performance of QUBO solver (\cite{verma2020penalty}). Moreover, the current generation of quantum annealers that rely on QUBO models have limitations on the size and number of quadratic interactions in the $Q$ matrix. In this paper, we demonstrate a linear transformation of the inequality constraint that reduces the number of slack variables and quadratic interaction terms. The findings are beneficial to the area of Noisy Intermediate-Scalar Quantum (NISQ) computing, which refers to combining classical and quantum computing approaches.

In summary, our goal is to find an effective way to handle a linear inequality in a QUBO model. Previous work on this includes \cite{glover2002solving} wherein the authors transform the Quadratic Knapsack Problem with a single inequality constraint into a QUBO by employing a quadratic infeasibility penalty via the use of slack variables. As discussed earlier, the authors covered a slack range of $63 (=2^m -1)$ units using $6 (=m)$ slack variables. More recently, the authors in \cite{verma2020penalty} presented a technique to handle slack variables for a single inequality constraint. Their method relied on partitioning the slack range and using an existing QUBO solver on multiple threads.

In comparison, we propose an approximation of the original problem using fewer slack variables and which is also amenable to partitioning using multiple threads. Our simple approach dramatically reduces the number of slack variables for generic inequality constraints of the form ${A x} \le b$. This reduction is dependent on a user-defined parameter $\rho$.  Note that while our proposed technique is valid for multiple linear inequalities (unlike \cite{verma2020penalty}), we will focus on a single linear inequality for the computational experiments. 

The paper is organized as follows. We present our reduced slack variables approach in Section 2. The details of the solution technique are presented in Section 3. Section 3 also examines the results on benchmark QUBO datasets followed by the conclusions in Section 4.

\section{Slack Variable Reduction Technique}
The base problem of interest is 
\begin{equation} \label{base}
min \; x'Qx \; s.t. \; Ax \leq b, \; x \in \{0,1\}.
\end{equation}
There are no positivity conditions on the $Q$ matrix. However, the coefficients of $A$ and $b$ are assumed to be positive integers and if not they are easily transformed via the substitution $y_i = 1-x_i$. As an example, the problem $min \; x'Qx \; s.t. \; 3 x_1 + 5 x_2 + 4 x_3 \leq 7$ would typically utilize a minimum of three slack variables. The slack coefficients given by $Ds = s_1 + 2 s_2 + 4 s_3$ covers the slack range $[0,7]$. The resulting equality constraint is transformed into the objective function through penalty term $M (Ax-Ds)^2$. The transformed objective function is $min \; x'Qx + M(3 x_1 + 5 x_2 + 4 x_3 -  s_1 - 2 s_2 - 4 s_3)^2$ and the degree of the resulting pseudo boolean polynomial is quadratic.

We now propose a scalar transformation of the inequality constraint that reduces the number of slack variables. The transformed problem is:
\begin{equation} \label{transform}
min \; x'Qx \; s.t. \; \frac{Ax}{\rho} \leq \frac{b}{\rho}, \; x \in \{0,1\}
\end{equation}
where $\rho$ is a user-defined positive integer. Note that $\rho=1$ corresponds to the base problem (\ref{base}). We simply divide both the sides of the linear inequality by $\rho$. Again, if $\frac{b}{\rho}$ is not of the form $2^m - 1$ we will add a constant to both sides. In this way, we now require only $O(log(\frac{b+1}{\rho}))$ slack variables thereby reducing the total number of slack variables by $O(log(\rho))$. For example, if we divide both sides of the inequality constraint $3 x_1 + 5 x_2 + 4 x_3 \leq 7$ by $7$, we get  $\frac{3}{7} x_1 + \frac{5}{7} x_2 + \frac{4}{7} x_3 \leq \frac{7}{7} = 0.42 x_1 + 0.71 x_2 + 0.57 x_3 \leq 1$. As a result, we arrive at the best case scenario with $\rho = b$ requiring only one additional slack variable.

Next, we interpret the constraint as $0.42 x_1 + 0.71 x_2 + 0.57 x_3 = \{0,1\}$. Further, we approximate the right-hand side by a slack variable $s_1$ leading to a total savings of two slack variables and ten new quadratic terms. Thus, the transformed problem becomes $min \; x'Qx + M'(0.42 x_1 + 0.71 x_2 + 0.57 x_3 - s_1)^2$. Note that the left-hand side of the fractional inequality could still take fractional values and this would not be exactly equal to the value corresponding to a binary slack variable ($0$ or $1$). In such a scenario, the penalty term would be non-zero even if the solution vector is feasible. This limitation of our proposed approximation can lead to a loss in solution quality, which we will analyze in the next section. 

A theoretical property of the proposed linear transformation is presented in the following lemma. It is important to understand the relationship between the optimal solution of the base problem (\ref{base}) and the transformed problem (\ref{transform}).
\begin{lemma}
	The optimal solution vector $x^*$ of (\ref{base}) and (\ref{transform}) will be the same if $\frac{Ax^*}{\rho}$ and $\frac{b}{\rho}$ are integers. %OR IF ... THEN ...
\end{lemma}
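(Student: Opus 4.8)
The plan is to argue in two stages: first that the two constrained programs share an identical feasible region, and then that the integrality hypotheses guarantee that the reduced-slack penalty realization of (\ref{transform}) incurs zero approximation error at the optimum, so that the scaled formulation recovers exactly the same optimal vector.

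First I would observe that, since $\rho$ is a positive integer, dividing both sides of $Ax \le b$ by $\rho$ is an order-preserving operation: $Ax \le b$ holds if and only if $\frac{Ax}{\rho} \le \frac{b}{\rho}$. Hence the feasible sets of (\ref{base}) and (\ref{transform}) coincide as subsets of $\{0,1\}^n$, and since both problems minimize the same objective $x'Qx$, any minimizer of one is a minimizer of the other. This already pins down a common optimal vector $x^*$ at the level of the exact constrained programs; the role of the integrality hypotheses is to ensure that the penalty formulation actually used to solve (\ref{transform}) — in which $\frac{b}{\rho}$ is replaced by binary slack variables and the constraint is absorbed into the objective as $M'\left(\frac{Ax}{\rho} - D s\right)^2$ — does not distort this minimizer.

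Next I would examine the penalty term at $x = x^*$. Because the coefficients of $D$ are integers and $s$ is binary, the slack expression $Ds$ takes only non-negative integer values, and by construction it ranges over every integer in $[0, \frac{b}{\rho}]$ once $\frac{b}{\rho}$ is of the admissible form, which is guaranteed here by the hypothesis that $\frac{b}{\rho} \in \mathbb{Z}$ after the constant shift described in Section 2. The key step is to show that the hypothesis $\frac{Ax^*}{\rho} \in \mathbb{Z}$, together with feasibility $\frac{Ax^*}{\rho} \le \frac{b}{\rho}$, lets me choose a slack assignment $s^*$ with $Ds^* = \frac{Ax^*}{\rho}$, so that the penalty $M'\left(\frac{Ax^*}{\rho} - Ds^*\right)^2$ vanishes. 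Consequently the augmented objective at $x^*$ equals $x^{*\prime} Q x^*$ exactly, with no inflation from the approximation.

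Finally I would close the optimality argument: for any other feasible $x$ the penalty is non-negative, so the augmented objective is at least $x'Qx \ge x^{*\prime} Q x^*$, while a sufficiently large $M'$ renders every infeasible $x$ (for which $\frac{Ax}{\rho} > \frac{b}{\rho}$ forces a strictly positive penalty) inferior to $x^*$; hence $x^*$ minimizes the penalized transformed problem as well. The main obstacle I anticipate is precisely the exact-representability step — verifying that the reduced binary slack set can reproduce the integer value $\frac{Ax^*}{\rho}$ exactly — because this is where the loss of slack variables could in principle leave a gap in the attainable slack range; the integrality of both $\frac{Ax^*}{\rho}$ and $\frac{b}{\rho}$ is exactly the condition that removes any such gap and forces the approximation error to zero.
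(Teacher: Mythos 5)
Your proof is correct, and its first stage is essentially the paper's entire argument: the paper shows that an optimizer $x^*$ of (\ref{base}) is feasible for (\ref{transform}) by dividing $Ax^* \in [0,b]$ through by $\rho$, notes the objectives coincide, and concludes optimality (saying ``similarly'' for the converse); your observation that the two feasible sets are literally identical because division by a positive $\rho$ preserves the inequality delivers both directions at once and is, if anything, cleaner. Where you genuinely depart from the paper is in your second and third stages, which analyze the penalized slack formulation $M'\left(\frac{Ax}{\rho} - Ds\right)^2$ and show that integrality of $\frac{Ax^*}{\rho}$ is exactly what lets a binary slack assignment reproduce the left-hand side and drive the penalty to zero at $x^*$. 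This is a real improvement in faithfulness to the lemma's intent: in the paper's own proof the hypothesis that $\frac{Ax^*}{\rho}$ is an integer is never actually used (and the integrality of $\frac{b}{\rho}$ is invoked only to write an interval containment that holds regardless), so as literally written the paper proves a statement that is true for any positive $\rho$ and says nothing about why the hypotheses matter. The paper relegates that content to the informal discussion following the lemma (``if $\frac{Ax^*}{\rho}$ is fractional, the resulting penalty term \ldots would be positive''), whereas you have incorporated it into the proof where it belongs. The only caveat is that your argument proves a slightly different (stronger, and more useful) statement than the one the lemma literally asserts about problems (\ref{base}) and (\ref{transform}) as exact constrained programs; for the literal statement, your first paragraph alone suffices and the integrality hypotheses are vacuous.
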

\begin{proof}
	First, we will prove the lemma starting from (\ref{base}). If $x^*$ is the optimal solution for $min \; xQx \; s.t. \; Ax \leq b$, then $x^{*'}Qx^* \leq x'Qx \; \forall x \in X$ such that $Ax \leq b$. Assuming integer coefficients, we have  $Ax^* \in [0,b]$. Dividing both sides by a positive integer $\rho$, we get  $\frac{Ax^*}{\rho} \in  [0,\frac{b}{\rho}]$ if we assume that  $\frac{b}{\rho}$ is an integer. Thus, $x^*$ is feasible to the transformed problem (\ref{transform}). Since the objective function of (\ref{base}) and (\ref{transform}) are the same, $x^*$ is also optimal to the transformed problem (\ref{transform}). Thus, if $x^*$ is the optimal solution for (\ref{base}), it is also optimal for (\ref{transform}). Similarly, we can prove the other direction of this lemma.
\end{proof}

As a consequence of this lemma, we can guarantee to find the same optimal solution using the transformed problem as long as the optimal solution $x^*$ is such that $\frac{Ax^*}{\rho}$ and $\frac{b}{\rho}$ are integral. If $\frac{Ax^*}{\rho}$ is fractional, the resulting penalty term for the solution vector $x^*$ would be positive, given that $Ds$ would always be integral. This would disincentivize the solver to prefer $x^*$ in the neighborhood search of the transformed problem. Hence, there is no guarantee that the optimal solution for the base and the transformed problem would match in such a scenario.

Assuming integer coefficients, $Ax^*$ is a multiple of a prime number. Hence, if we enumerate $\rho$ over the set of all primes, we could guarantee to find the best solution. This could also be efficiently implemented using multiple threads. In Section 3, we evaluate the impact of different choices of $\rho$ on the computational results.

The transformation also requires adjustment to the original penalty coefficient $M$. As we divide both sides of the inequality of (\ref{base}) by $\rho$, the penalty coefficient $M$ should accordingly see a proportional increase. The penalty term of the base problem $M(Ax - Ds)^2$ is reduced by a factor of $\frac{1}{\rho^2}$ as we divide the inequality constraint by $\rho$. Therefore, if we use the same value of $M$ for both problems, the penalty term for the transformed problem will be smaller. Hence, we need to adjust the penalty coefficient $M'$ for the transformed problem. We use the relationship given by $M' = \rho^2 M$. In this way, we ensure that the penalty terms for the two problems are around the same orders of magnitude.

\section{Solution Technique and Results}

The QUBO solver used is a generic solver with no customization for handling constraints.   It is a tabu search based approach using the efficient 1-flip evaluation scheme detailed in (\cite{kochenberger2004unified}) and incorporates path relinking between elite solutions and backtracking to break out of local optima, along with strategic oscillation for neighborhood search intensification followed by restarts after these methods stop yielding improvements. The solution technique was implemented using DOCPLEX API in Python 3.6 and experiments were performed on a 3.40 GHz Intel Core i7 processor with 16 GB RAM running 64 bit Windows 7 OS.  The complete set of results can be found at \cite{slacks}.

We will present the impact of the proposed transformation on the benchmark Quadratic Knapsack Problem (QKP) instances described in \cite{billionnet2004exact}. The optimization problem of interest is $max \; x'Qx \; s.t. \; Ax <= b$. These datasets contain $[100,200,300]$ variables with varying density levels of $[25\%,50\%,75\%]$. A single linear inequality constraint is imposed on the quadratic binary objective. The coefficients of the left hand side of the constraint lie in the range $[0,100]$. We experiment with $\rho \in [1,10,100]$. Note that $\rho=1$ corresponds to the standard approach using the maximal number of slack variables. The number of slack variables reduces as $\rho$ increases leading to an approximation. We run our transformation approach detailed in Section 2 using the QUBO solver for a limited runtime of 100 seconds. We record the best objective function and best value of the left-hand side reported by the solver. All the values of $\rho$ yielded feasible solutions. Hence, the best left-hand side value never exceeded the right-hand side. The complete set of results can be obtained from \cite{slacks}. For the sake of brevity, we will present a summary of the results.

First, we will study the impact of $M$ on the computational experiments. Note that $M' = \rho^2 M$ is used for each instance. As $M$ increases, the penalty term associated with the violation of the linear inequality increases. Hence, the priority on the satisfaction of the linear inequality increases in comparison to the quadratic objective. Figure \ref{fig:fig0} presents a distribution of the number of best solutions found by the different combinations of $\rho$, $M$, and problem size. It is evident from the figure that $M=1000$ yielded $0$ or very few best solutions for the bigger problems in the limited timeframe. Hence, $M = 1000$ is not preferred over $M = 100$. We also note that $\rho = 10$ and $100$ lead to the best solution values in many instances. More specifically, $\rho = 1$ found the best solution in only $43$ out of $141$ total instances. Bigger values of $\rho$ lead to a more focused search wherein the left-hand side entries that are not a multiple of $\rho$ are heavily penalized. Hence, the solver can ignore such values. Further, these results demonstrate the utility of the transformation approach.

\begin{figure}[htbp]
	\centerline{\includegraphics[scale=0.9]{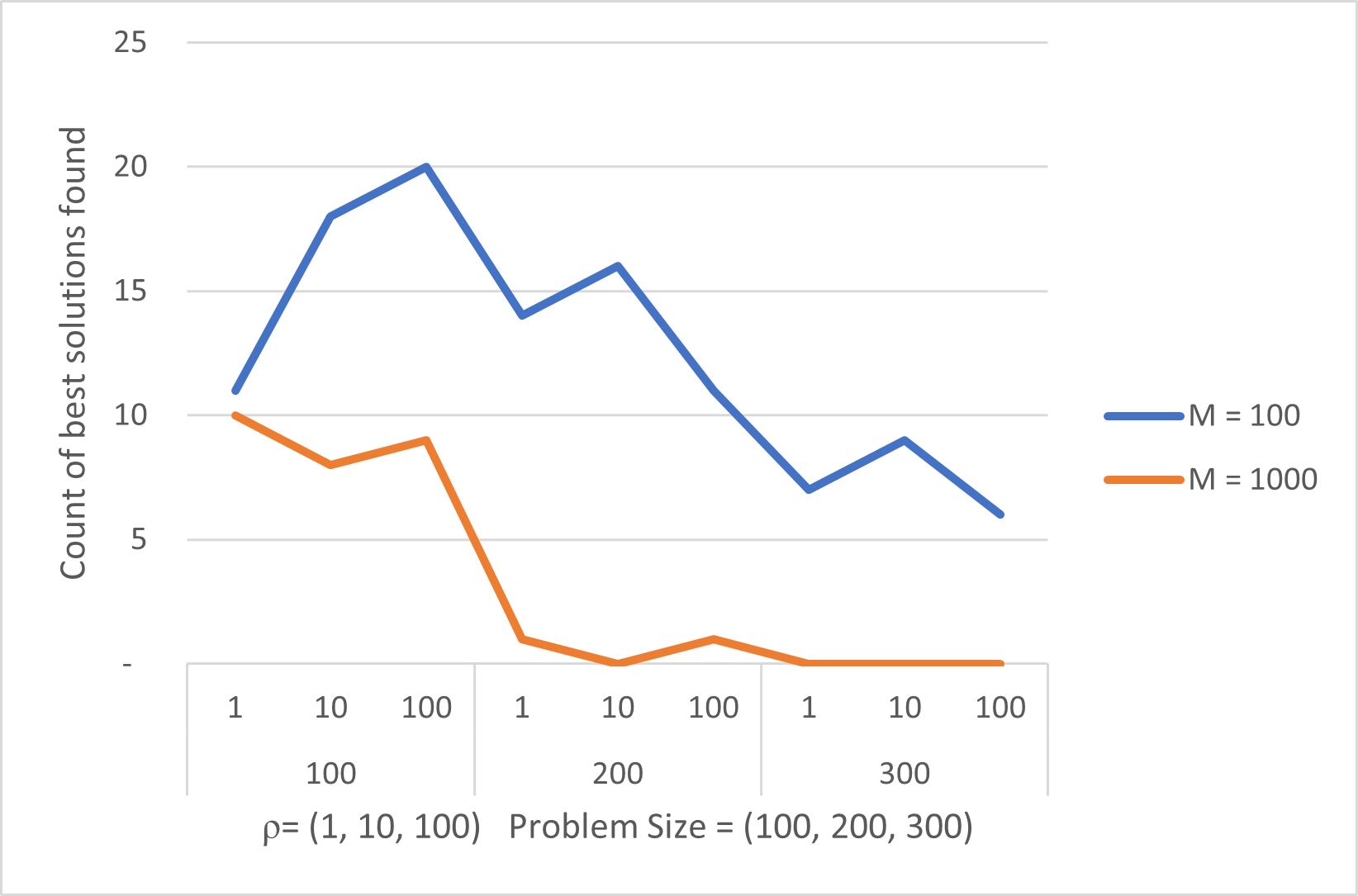}}
	\caption{Distribution of best solutions with respect to $M$ and problem size}
	\label{fig:fig0}
\end{figure}

Next, we explore the impact of $Q$ density on computational experiments and arrange the number of best solution values with respect to the density of the non-diagonal elements in the Q matrix and $\rho$ in Figure \ref{fig:fig01}. We cannot observe any discernible trends. In other words, the impact on $\rho$ on the best-found solution values does not change with respect to the density. 

\begin{figure}[htbp]
	\centerline{\includegraphics[scale=1.0]{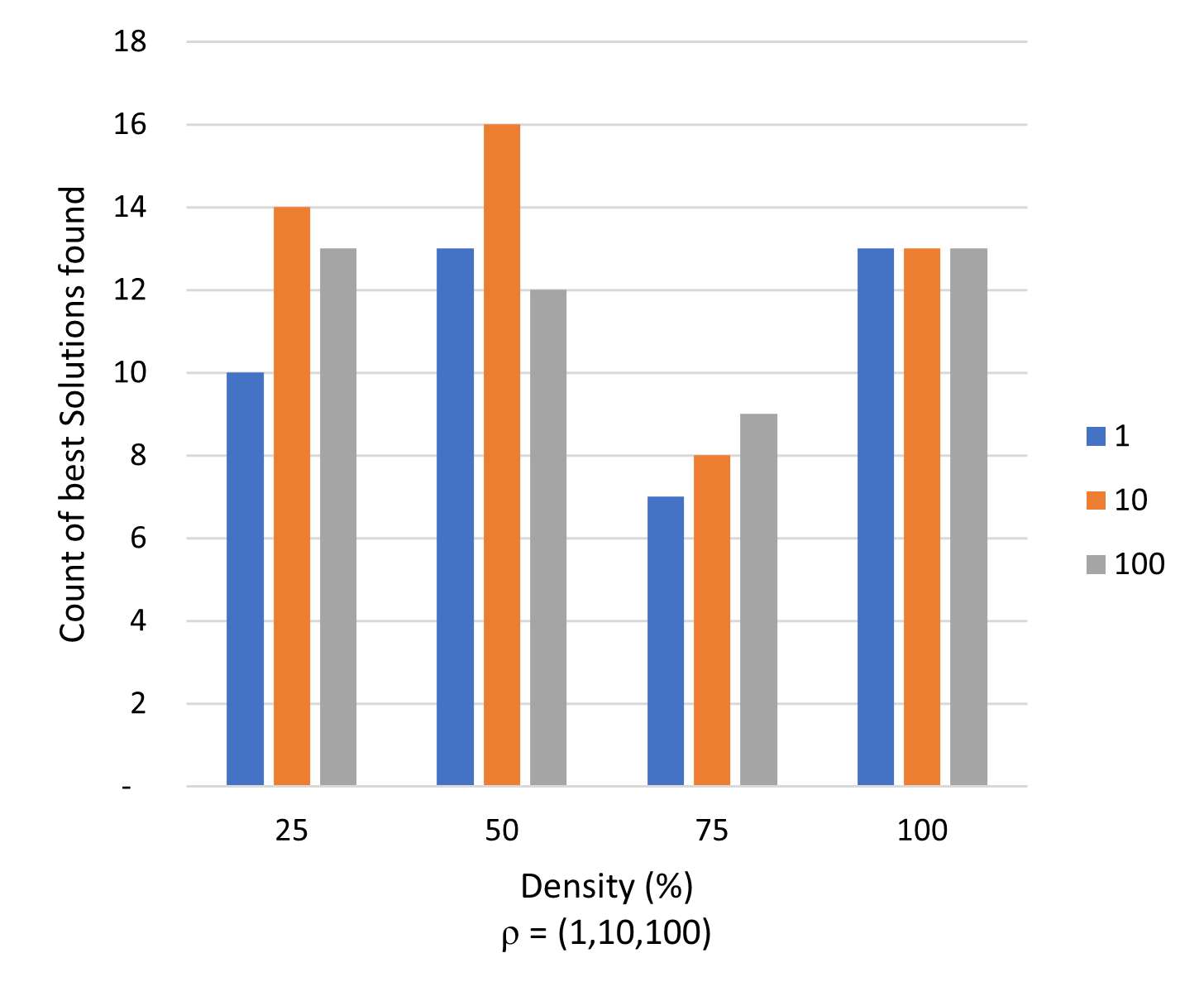}}
	\caption{Distribution of best solutions with respect to $Q$ density}
	\label{fig:fig01}
\end{figure}

We analyzed the effect of $Q$ density and $\rho$ on the variation between objective values on the problem set in Figure \ref{fig:fig02}. Hence, we measured the percent difference from the maximum value obtained for each problem as categorized by density and $\rho$. The percent deviation from the maximum provides a measure of variance in that if all problems in a $Q$ density set found the same maximum using different $\rho$ then the measure is 0\% and $\rho$ had no effect. The figure illustrates that the 75\% density problems showed the most variance regardless of $\rho$, but that $\rho = 1$ tended to allow for larger variances in objective value. This is consistent with the fact that $\rho = 1$ allows a larger solution landscape than the larger values of $\rho$. 

\begin{figure}[htbp]
	\centerline{\includegraphics[scale=0.35]{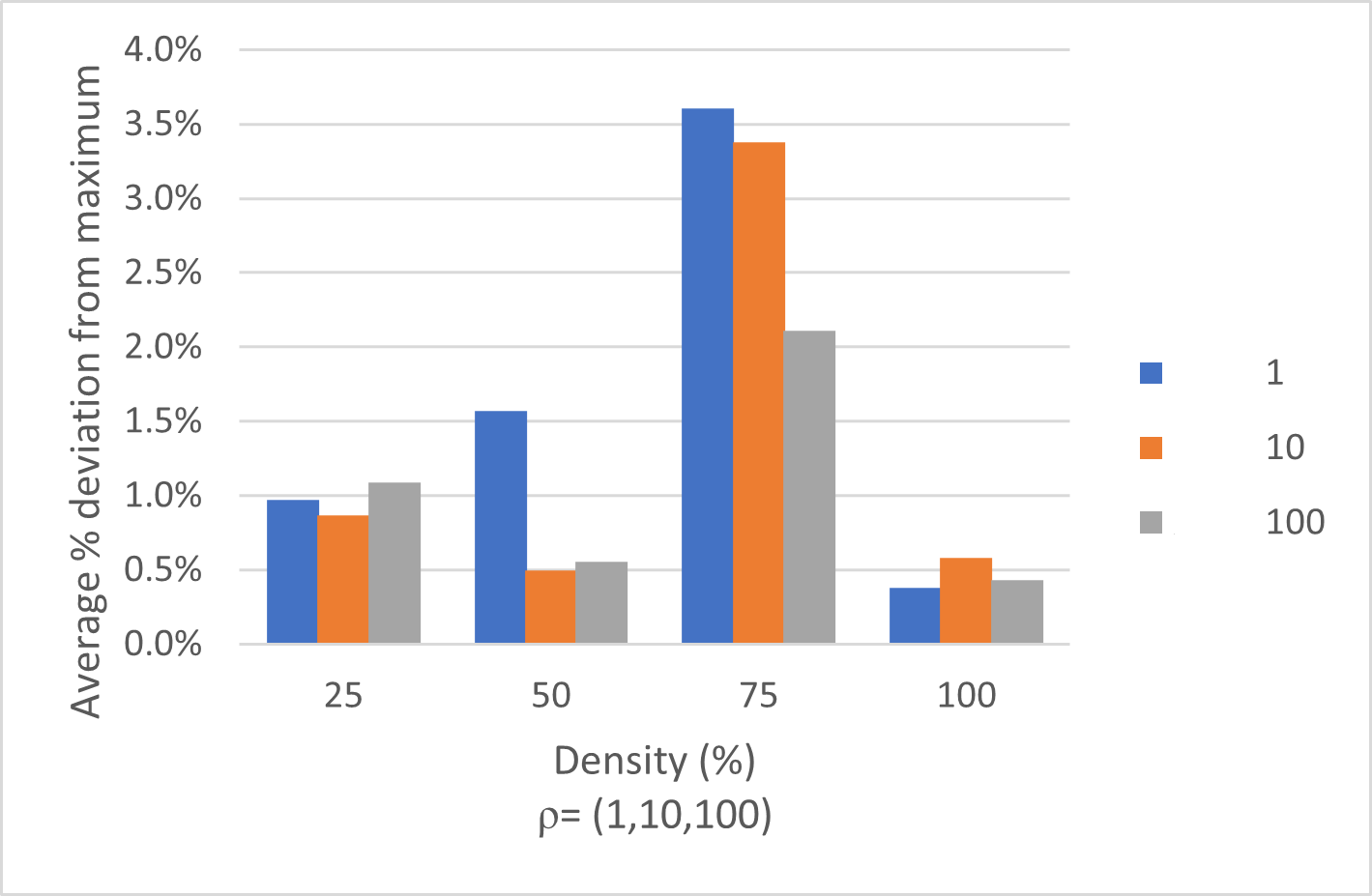}}
	\caption{Deviation of the best solution with respect to $Q$ density}
	\label{fig:fig02}
\end{figure}

For computational experiments on bigger datasets, we use the ten ORLIB QUBO instances presented in \cite{beasley1990or}. Similar to \cite{vyskovcil2019embedding} and \cite{verma2020penalty}, we introduce a single constraint $\sum_i x_i \le C$ where $C \in [20\%,50\%,80\%]$ of the number of variables $(2500)$. The problem of interest is $min \; x'Qx \; s.t. \sum_i x_i \le C$.

Figure \ref{fig:fig1} illustrates the progression of the average of the ten ORLIB problem objective values over time for a right-hand-side of $500$ and three different values of $\rho$.   Note that the scale of the y-axis is such that each mark is only $0.2\%$ change in objective value; hence the transformations produce results very close to the original problem ($\rho = 1$) but with fewer linear and quadratic terms.  

\begin{figure}[htbp]
	\centerline{\includegraphics[scale=0.44]{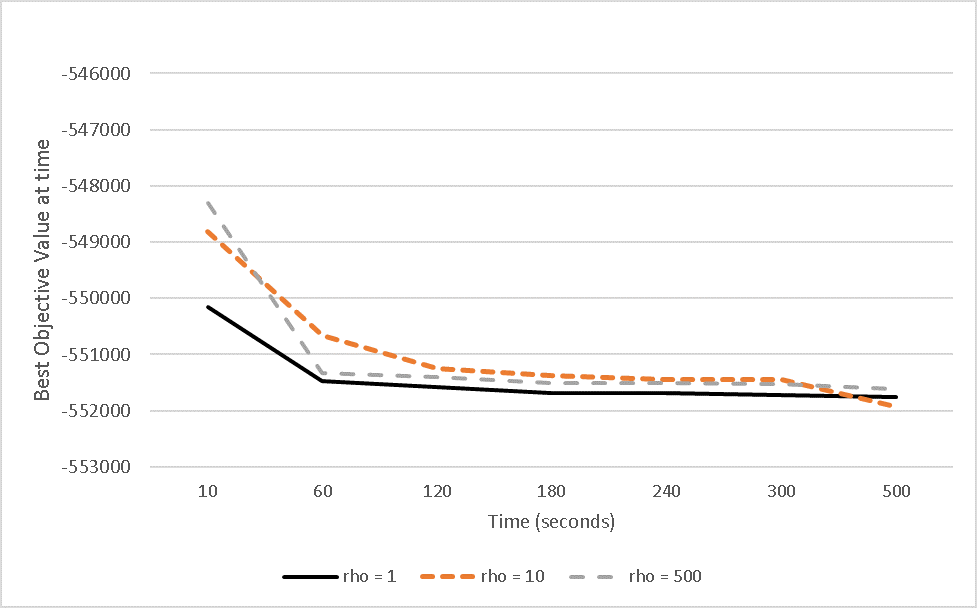}}
	\caption{Progression of the average of the objective values over time for $b = 500$}
	\label{fig:fig1}
\end{figure}

Figure \ref{fig:fig2} also uses the average objective values of the same ten ORLIB problems with right-hand-side equal to $50\% (1250)$.   The progression of solution values indicate that the transformed problems once again produce solutions that are consistently close to, or better than, the original formulation.  

\begin{figure}[htbp]
	\centerline{\includegraphics[scale=0.44]{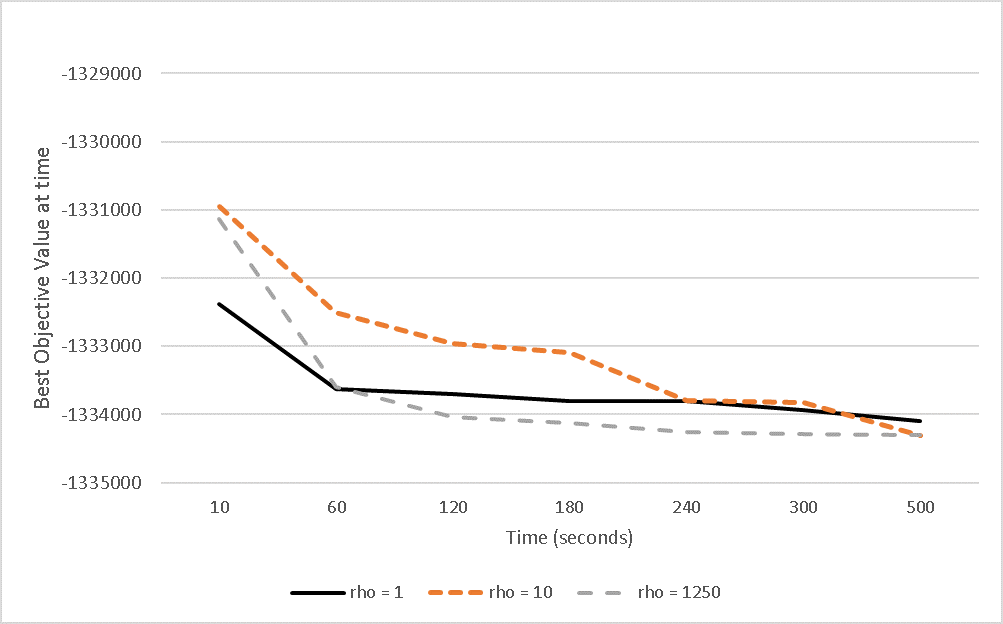}}
	\caption{Progression of the average of the objective values over time for $b = 1250$}
	\label{fig:fig2}
\end{figure}

Figure \ref{fig:fig3} illustrates the effect of the transformation when $\rho$ is set equal to a  right-hand-side of $2000$ which is greater than the value of the left-hand-side in the optimal solution. The left-hand-side at optimality for the ten problems is approximately $1650$ such that a slack of $350$ is needed to avoid a penalty.   However, since we use a single slack variable, that slack value is not available for $\rho = 2000$. Hence the transformed problem only reaches within $10\%$ of the best, while $\rho = 10$ performs much better because it can more closely approximate the needed slack value.

\begin{figure}[htbp]
	\centerline{\includegraphics[scale=0.70]{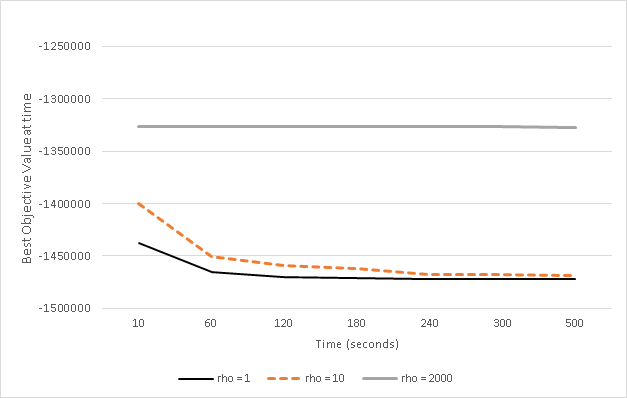}}
	\caption{Progression of the average of the objective values over time for $b = 2000$}
	\label{fig:fig3}
\end{figure}

\section{Conclusions}
We present a simple, yet powerful, slack variable and quadratic interaction term reduction technique for modeling inequality constraints in QUBO. The number of slack variables can be controlled based on a user-defined parameter and leads to comparable solutions for large QUBO instances. In terms of future research directions, we are adapting the current method to handle multiple constraints and investigating an approach for focusing the slack variable range.

%% The Appendices part is started with the command \appendix;
%% appendix sections are then done as normal sections
%% \appendix

%% \section{}
%% \label{}

%% If you have bibdatabase file and want bibtex to generate the
%% bibitems, please use
%%
\bibliographystyle{elsarticle-num} 
\bibliography{penaltyFunction}

\begin{thebibliography}{1}
\expandafter\ifx\csname url\endcsname\relax
  \def\url#1{\texttt{#1}}\fi
\expandafter\ifx\csname urlprefix\endcsname\relax\def\urlprefix{URL }\fi
\expandafter\ifx\csname href\endcsname\relax
  \def\href#1#2{#2} \def\path#1{#1}\fi

\bibitem{kochenberger2014unconstrained}
G.~Kochenberger, J.-K. Hao, F.~Glover, M.~Lewis, Z.~L{\"u}, H.~Wang, Y.~Wang,
  The unconstrained binary quadratic programming problem: a survey, Journal of
  Combinatorial Optimization 28~(1) (2014) 58--81.

\bibitem{glover2019quantum}
F.~Glover, G.~Kochenberger, Y.~Du, Quantum bridge analytics i: a tutorial on
  formulating and using qubo models, 4OR 17~(4) (2019) 335--371.

\bibitem{glover2002solving}
F.~Glover, G.~Kochenberger, B.~Alidaee, M.~Amini, Solving quadratic knapsack
  problems by reformulation and tabu search: Single constraint case, in:
  Combinatorial and global optimization, World Scientific, 2002, pp. 111--121.

\bibitem{verma2020penalty}
A.~Verma, M.~Lewis, Penalty and partitioning techniques to improve performance
  of qubo solvers, Discrete Optimization (2020) 100594.

\bibitem{kochenberger2004unified}
G.~A. Kochenberger, F.~Glover, B.~Alidaee, C.~Rego, A unified modeling and
  solution framework for combinatorial optimization problems, OR Spectrum
  26~(2) (2004) 237--250.

\bibitem{slacks}
Results, \url{https://github.com/amitverma1509/slacks}, accessed: 2020-09-01.

\bibitem{billionnet2004exact}
A.~Billionnet, {\'E}.~Soutif, An exact method based on lagrangian decomposition
  for the 0--1 quadratic knapsack problem, European Journal of operational
  research 157~(3) (2004) 565--575.

\bibitem{beasley1990or}
J.~E. Beasley, Or-library: distributing test problems by electronic mail,
  Journal of the operational research society 41~(11) (1990) 1069--1072.

\bibitem{vyskovcil2019embedding}
T.~Vyskocil, H.~Djidjev, Embedding equality constraints of optimization
  problems into a quantum annealer, Algorithms 12~(4) (2019) 77.

\end{thebibliography}
\begin{comment}

\end{comment}

%% else use the following coding to input the bibitems directly in the
%% TeX file.

%\bibliographystyle{spmpsci}      % mathematics and physical sciences
%\bibliographystyle{spphys}       % APS-like style for physics
%{\footnotesize \bibliography{qsubs} }  % name your BibTeX data base
%% \bibliographystyle{elsarticle-harv} 
%%  \bibliography{<your bibdatabase>}

%\begin{thebibliography}{00}

%% \bibitem{label}
%% Text of bibliographic item

%\bibitem{}

%\end{thebibliography}
\end{document}